 \newcommand{\zipo}{\ensuremath{\text{BA}_{1/x}}}
 \newcommand{\ds}{\displaystyle}
\colorlet{shadecolor}{gray!20}
 \definecolor{cola}{rgb}{.0,.4,.9}
 \newcommand{\bmat}{\begin{pmatrix}}
 \newcommand{\emat}{\end{pmatrix}}
\newcommand{\lambdastar}{\ensuremath{\lambda_0^*}}
 \newcommand{\Reals}[1]{{\rm I\! R}^{#1}}
 \newcommand{\cq}{\ensuremath{p}}
\newtheorem{result}[theorem]{Result}
 \numberwithin{equation}{section}
 \numberwithin{table}{section}
 \numberwithin{figure}{section}
\begin{document}


 \markboth{J.Brannick, X. Hu, C. Rodrigo, and L. Zikatanov}{LFA for parallel smoothers and aggressive coarsening}
 \title{Local Fourier analysis of multigrid
 methods with polynomial smoothers and aggressive coarsening}


 \author[James Brannick, Xiaozhe Hu, Carmen Rodrigo, and Ludmil
 Zikatanov]{James Brannick\affil{1}, Xiaozhe
   Hu\affil{1}\comma\corrauth,  Carmen Rodrigo\affil{2}, and
   Ludmil Zikatanov\affil{1}\comma\affil{3}}
 \address{\affilnum{1}\ Department of Mathematics, The
       Pennsylvania State University, University Park, PA 16802, USA.
 \\
 \affilnum{2}\ Department of Applied Mathematics, University of Zaragoza,
 C/ Maria de Luna 3, 50018, Zaragoza, Spain.\\
 \affilnum{3}\ Institute of Mathematics and Informatics, Bulgarian
 Academy of Sciences, Acad.~G.~Bonchev Str., Bl. 8, 1113 Sofia, Bulgaria.
}

 \emails{
 {\tt brannick@psu.edu} (James Brannick),
 {\tt hu\_x@math.psu.edu} (Xiaozhe Hu), {\tt carmenr@unizar.es} (Carmen
 Rodrigo),
 {\tt ludmil@psu.edu} (Ludmil Zikatanov)
 }
 %

 \begin{abstract}
   We focus on the study of multigrid methods with aggressive
   coarsening and polynomial smoothers for the solution of the linear
   systems corresponding to finite difference/element discretizations
   of the Laplace equation.  Using local Fourier analysis we determine
   \textit{automatically} the optimal values for the parameters
   involved in defining the polynomial smoothers and achieve
   fast convergence of cycles with aggressive coarsening. We also
   present numerical tests supporting the theoretical results and the
   heuristic ideas. The methods we introduce are highly parallelizable
   and efficient multigrid algorithms on structured and
   semi-structured grids in two and three spatial dimensions.
 \end{abstract}

 \keywords{multigrid, local Fourier analysis, polynomial smoothers, aggressive coarsening}

 \ams{65F10, 65N22, 65N55}

 \maketitle

 \section{Introduction}

 For emerging many-core parallel architectures it has been observed
 that visiting the coarser levels of a multilevel hierarchy leads to a
 loss in performance, as measured by the percentage of peak performance achieved by
 the multigrid solver on such architectures.  Roughly speaking, on the
 finer levels, computing residuals and smoothing can achieve relatively
 high performance, whereas on the coarser levels the performance of
 multigrid degrades due to the fact that fewer of the active threads
 are needed for computation there.  These observations motivate the
 further study and development of multigrid methods that apply more
 smoothing on the finer levels together with aggressive coarsening
 strategies.

 The use of point-wise smoothers (e.g., Jacobi and Gauss Seidel)
 together with aggressive coarsening in a multigrid solver has been
 studied using local Fourier analysis (LFA)
 in~\cite{2007Bin-ZubairH_OosterleeC_WienandsR-aa,2005WienandsR_JoppichW-aa}
 for rectangular grids
 and~\cite{2009GasparF_GraciaJ_LisbonaF_RodrigoC-aa} for triangular
 grids.  In these works, it has been observed that using aggressive
 coarsening is less efficient in terms of the total number of floating
 point operations than a more gradual coarsening approach, since these
 standard smoothing iterations are not able to effectively reduce a
 sufficiently large subspace of the high frequency components of the
 error.

 However, polynomial smoothers are well suited for aggressive coarsening
 approach since they can be constructed to achieve a preset
 convergence rate on a given subspace corresponding, for example, to
 a subinterval of the high frequency components of the error.  As
 shown in~\cite{2013_Vanek_Brezina, 2012BrezinaM_VassilevskiP-aa,
     2012KrausJ_VassilevskiP_ZikatanovL-aa, Baker.A;Falgout.R;Kolev.T;Yang.U.2011a}
by using a sufficiently large degree in the polynomial approximation it is
 possible to guarantee prescribed damping on a preset subinterval of
 high frequency components. These works contain important results
  and provide efficient algorithms by adjusting the polynomial degree
  for a given coarsening ratio.

  The focus of our work is on determining precisely the parameters of
  the polynomial smoothers, such as intervals of approximation,
  damping factors for high frequencies, coarsening ratios which result
  in best possible convergence rate. This, of course is an ambitious
  goal, but for semi-structured triangular and also rectangular
  grids this can be done. Our idea is to use the local Fourier
  analysis (LFA) to automatically determine the smoother and
  coarsening parameters which result in best performance.  As we show,
  LFA allows us to obtain quantitative estimates of the
  performance of multigrid methods with polynomial smoothers of
  arbitrary degree and aggressive coarsening.  As shown in
  \cite{Baker.A;Falgout.R;Kolev.T;Yang.U.2011a,Baker.A;Falgout.R;Kolev.T;Yang.U.2011b}
  polynomial smoothers result in   algorithms with high degree of
  parallelism  and outperform algorithms based on more classical
  relaxation methods. This is an additional advantage of the
  algorithms studied here as well.

  The paper is organized as follows. We review some basic facts about
  two-grid and multigrid iterations in Section~\ref{sect:tg-mg}. Next,
  in Section~\ref{sect:poly-smoothers} we introduce the polynomial
  smoothers of interest -- all based on Chebyshev polynomials arising
  as solutions to different minimization problems -- (1) appropriately
  shifted and scaled classical Chebyshev polynomials, (2) the so
  called smoothed aggregation polynomial, used as a smoother
  in~\cite{2012BrezinaM_VassilevskiP-aa}, or (3) the best polynomial
  approximation to $x^{-1}$ which is proposed as a smoother
  in~\cite{2012KrausJ_VassilevskiP_ZikatanovL-aa}.
 The local Fourier analysis for these polynomial smoothers, together
 with a two-grid LFA for aggressive coarsening are
 presented in Section~\ref{sect:LFA}, as are their extensions to
 triangular grids. Then bounds on the smoothing factors for the
 polynomials are calculated in Section~\ref{sect:bounds_LFA}. In this
 section we also show how to utilize the LFA results in choosing the
 optimal parameters for the corresponding polynomial smoother. In
 Section~\ref{sect:numerics} we present numerical tests illustrating
 the findings in the previous sections and also we provide extension
 to triangular grids in section~\ref{sect:triangular_numerics}. Finally
 we draw some conclusions in Section~\ref{sect:conclusions}.

 \section{Two-grid and multigrid iterations\label{sect:tg-mg}}
 A variational two-grid (two-level) method with one post smoothing step is defined as follows.
Given an approximation $w\in V$ to the solution $u$
 of the system $Au=f$, an update $v\in V$
 is computed in two steps
 \begin{enumerate}
 \item [(1)] $y = w + PA_H^{-1}P^T(f-Aw)$.
 \item [(2)] $v = y+R(f-Ay)$.
 \end{enumerate}
 Step~(1) is the coarse-grid correction iteration and step~(2) is
 called smoothing step and the operator $R$ is chosen so that it is
 convergent in $A$-norm, namely $\parallel I -RA \parallel_A < 1$.
 The corresponding error propagation operator of the two-level method
 is given by
 $$
 E_{TL} = (I-RA)(I-\pi_A), \quad \pi_A = PA_H^{-1} P^T A,
 $$
 where $A_H$ denotes the coarse-level operator, obtained by
 re-discretizing the problem on the coarse mesh, or, more generally,
 via Galerkin definition $A_H = P^TAP$ where $P$ is the prolongation
 matrix and $R$ denotes the smoothing operator.  Note that if
 $\bar{R}= R+R^T - R^T A R$, then $\left( I - \bar{R} A \right) = ( I
 - R^T A )( I - R A )$, and if $\bar{R}$ is SPD, then the smoother is
 convergent in $A$-norm. We will consider smoothers for which $R=R^T$
 and in this case $\bar{R} = 2R-RAR$.

The above two-level method can be easily generalized to multilevel methods by simple recursion.  Suppose we already have defined the multilevel method on the coarse level, which is denoted by $B_H$ (on the coarsest level, we solve the linear system exactly, i.e. $B_H$ on the coarsest level is defined by $A_H^{-1}$), then the multilevel method on the fine level can be defined as follows:
 \begin{enumerate}
 \item [(1)] $y = w + PB_HP^T(f-Aw)$.
 \item [(2)] $v = y+R(f-Ay)$.
 \end{enumerate}
 It is easy to see the corresponding error propagation operator for the multilevel method is given by
 $$
 E_{ML} = I - BA = (I-RA)(I-PB_HP^TA).
 $$
Note that, the multilevel method, denoted by $B$, is defined recursively through the multilevel method $B_H$ defined on the coarse grid.


 \section{Simple preconditioners and
smoothers based on Chebyshev polynomials\label{sect:poly-smoothers}}

To set the terminology, we call any SPD operator $R_0$ that
approximates $A^{-1}$ a preconditioner.  
A simple example of a preconditioner $R_0$ is the inverse of the
diagonal of $A$, i.e. $R_0=D^{-1}$. Another example is furnished by
the $\ell_1$-Jacobi preconditioner introduced
in~\cite{2009KolevT_VassilevskiP-aa}:
 $$R_0 = R_{\ell_1} = \operatorname{diag}(r_{11},r_{22},\hdots ), \quad
 r_{ii} = 1/(a_{ii} + \sum_{j\neq i} |a_{ij}|).$$
 Note that $R_0$ in
 such case is also a convergent smoother in the terminology given above.
 A weighted version of $R_{\ell_1}$ can be found in~\cite{2012BrezinaM_VassilevskiP-aa}.

 Next, for any given preconditioner we can construct a convergent
 smoother in the following way: Given a preconditioner $R_0$,
 let $\lambda_1$ be a given bound on the spectral radius of $R_0A$,
 that is, $\rho(R_0A)\le \lambda_1$ and
 $q_\nu(x)\in \mathcal{P}_\nu$ be a polynomial of degree $\nu$ such that
 \begin{equation}\label{poly-bounds}
 | 1-xq_\nu(x) | < 1, \quad \mbox{for all}\quad x\in [0,\lambda_1].
 \end{equation}
 We then set
 \begin{equation}\label{preconditioner-to-smoother}
 R=q_\nu (R_0A)R_0\quad\mbox{or equivalently}\quad
 R = R_0^{\frac12}q_\nu(R_0^{\frac12}AR_0^{\frac12})R_0^{\frac12}.
 \end{equation}
 Note that  this is a symmetric,  and, by~\eqref{poly-bounds}, a
 convergent smoother.

 Various polynomial approximations and the resulting parallel
 smoothers have been designed (also in the context of aggressive
 coarsening), see
 e.g.~\cite{Adams03parallelmultigrid, Brezina99,2012BrezinaM_VanekP_VassilevskiP-aa,2012KrausJ_VassilevskiP_ZikatanovL-aa,stuebenmultigrid, Vanek1996}. 

 We consider three different polynomial smoothers given
 in~\eqref{preconditioner-to-smoother}. As  all these are based on Chebyshev
 polynomials of first kind we recall the definitions of the $k$-th
 such polynomial $T_k(t)$ and the following recurrence relation
 $$
 T_0(t) = 1, \quad T_1(t) = t, \quad T_k(t) = 2tT_{k-1}(t) - T_{k-2}(t).
 $$
Equivalently we can compute that
 \[
 T_k(t) =
 \frac12\left(t-\sqrt{t^2-1}\right)^k +
 \frac12\left(t+\sqrt{t^2-1}\right)^k.
 \]
 Clearly, $|T_k(t)|\le 1$, $t\in [-1,1]$, and $T_k(t)=\pm 1$ at $(k+1)$
 different points in $[-1,1]$. It is clear then that $|T_k(t)|$ is
 strictly monotone for $t\notin (-1,1)$.

 We now define the three sequences of polynomials $\{q_\nu\}$ that we
 consider in this study. We fix $R_0$ and $\lambda_1\ge\rho(R_0A)$.
 Generally, we aim to construct a smoother $q_\nu(x)$ such that the
 polynomial $p_{\nu+1} (x) = 1 - x q_{\nu}(x)$ is small on the
 interval $[\lambda_0,\lambda_1]$, with $0\le \lambda_0$. Here $\nu$
 denotes the degree of the polynomial smoother, $\lambda_1$ is an
 approximation to the largest eigenvalue of $R_0A$, satisfying
 $\lambda_1>\rho(R_0A)$ and $\lambda_0$ is a parameter that can be
 adjusted to define the smoothing interval.  In some of the cases we
 set $\lambda_0=\frac{\lambda_1}{\kappa}$, with $\kappa>1$, and in
 other cases we take $\lambda_0=0$.  The smoothing factors clearly
 depend on the choice of $\lambda_0$. How to choose this parameter
 optimally is not obvious and this is discussed and addressed in our study.

The scaled and shifted classical Chebyshev polynomial solves
the following minimization problem:
 \begin{equation}\label{cheb-1-min}
 p_{\nu+1}(x) = \arg\min\{ \|p\|_{\infty,[\lambda_0,\lambda_1]},
 \quad p\in \mathcal{P}_{\nu+1}, \quad\mbox{such  that}\quad
 p(0)=1\}.
 \end{equation}
and we have then for $q_\nu(x)$:
 \begin{equation}\label{cheb-1}
 q_\nu(x) = \frac{1-p_{\nu+1}(x)}{x}, \quad
 p_{\nu+1}(x) =
 \frac{T_{\nu+1}\left(\frac{\lambda_0+\lambda_1-2x}{\lambda_1-\lambda_0}\right)}
 {T_{\nu+1}\left(\frac{\lambda_0+\lambda_1}{\lambda_1-\lambda_0}\right)}.
 \end{equation}
 Note that $p_{\nu+1}(x) = 1-x q_\nu(x)$, and from the monotonicity of
 $T_{\nu+1}(t)$ for $t\notin(-1,1)$ we have $|p_{\nu+1}(x)|<1$, for
 all $x\in [0,\lambda_1]$. This inequality holds independently of the
 choice of $\lambda_0\in [0,\lambda_1)$. We also refer
 to~\cite{amli-1,panayot-book} for more details on these smoothers.

 Using the three term recurrence relation of Chebyshev polynomials, it is straightforward to get the following identity
 \begin{equation*}
 q_\nu(x) - q_{\nu-1}(x) = \frac{2\zeta aT_\nu(a)}{T_{\nu+1}(a)}(1-xq_{\nu-1}(x)) + \frac{T_{\nu-1}(a)}{T_{\nu+1}(a)}(q_{\nu-1}(x) - q_{\nu-2}(x)).
 \end{equation*}
 where $\zeta = \frac{2}{\lambda_1+\lambda_0}$ and $a = \frac{\lambda_1+\lambda_0}{\lambda_1 - \lambda_0}$.  This identity leads to the following algorithm which computes
 \begin{equation*}
 Rr = q_{\mu}(D^{-1}A)D^{-1}r.
 \end{equation*}

 \begin{algorithm}
 \caption{Chebyshev Polynomial Preconditioning with $R = q_{\mu}(D^{-1}A)D^{-1}$}
 \begin{algorithmic}[1]
 \STATE $\bar{r} \gets D^{-1}r$,
 \STATE $v_0 \gets a\bar{r}$,
 \STATE $v_1 \gets \frac{4\zeta a^2}{2a^2-1} \bar{r} - \frac{2\zeta^2 a^2}{2a^2-1}D^{-1}A \bar{r}$,
 \FOR{$j=2, 3, \ldots,m$}
 \STATE $\bar{r}_{j-1} \gets D^{-1}(r - Av_{j-1})$,
 \STATE $v_j = v_{j-1} + \frac{T_{j-1}(a)}{T_{j+1}(a)} (v_{j-1} - v_{j-2}) + \frac{2 \zeta a T_{j}(a)}{T_{j+1}(a)} \bar{r}_{j-1}$.
 \ENDFOR
 \end{algorithmic}
 \end{algorithm}

 To define the smoothed aggregation polynomial smoother
 (see~\cite{2012BrezinaM_VassilevskiP-aa}), we use a minimization
 problem to define first $p_{\nu+1}(x)$:
 \begin{equation}\label{sa-1-min}
 p_{\nu+1} = \arg\min\{ \|p\sqrt{x}\|_{\infty,[0,\lambda_1]},
 \quad p\in \mathcal{P}_{\nu+1}, \quad\mbox{such  that}\quad
 p(0)=1\}.
 \end{equation}
 The smoothed aggregation polynomial $q_\nu$ is then defined as follows:
 \begin{equation}\label{sa-1}
 q_\nu(x) = \frac{1-p_{\nu+1}(x)}{x}, \quad
 p_{\nu+1}(x) =
 (-1)^\nu \bigg( \frac{1}{2\nu+3}\bigg)\bigg(\frac{\sqrt{\lambda_1}}{\sqrt{x}}\bigg)T_{2\nu+3}
 \bigg(\frac{\sqrt{x}}{\sqrt{\lambda_1}}\bigg).
 \end{equation}
 Note that this formulation does not require the parameter
 $\lambda_0$.  We refer to~\cite{Vanek1996,
   2012BrezinaM_VassilevskiP-aa,2012BrezinaM_VanekP_VassilevskiP-aa}
 for additional properties of this smoother.


Finally, we mention the smoother based on the best polynomial
approximation to $1/x$ in uniform norm
from~\cite{2012KrausJ_VassilevskiP_ZikatanovL-aa}.  The minimization
problem now directly defines $q_\nu(x)$
and
\begin{equation}\label{1x-1-min}
q_\nu(x)= \arg\min\{ \|x^{-1}-p\|_{\infty,[\lambda_0,\lambda_1]}, \quad p\in \mathcal{P}_{\nu}\}.
\end{equation}
Introducing $p_{\nu+1}(x)$ as in the previous cases, we note that this
formulation thus biases the approximation to small values of $x$.  The
polynomial $q_\nu(x)$ is computed using three-term recurrence
relation. For details on this polynomial and its implementation we refer
to~\cite{2012KrausJ_VassilevskiP_ZikatanovL-aa}.

\subsection{Local Fourier analysis for polynomial smoothers}\label{sect:LFA}
We now describe briefly the technique known as Local Fourier Analysis
(LFA), introduced by Brandt in \cite{1977BrandtA-aa}. This technique
is considered to be a useful tool in providing quantitative
convergence estimates for idealized multigrid algorithms. Such
estimates can be rigorously justified in cases when the boundary
conditions are periodic. It is also known that for structured or
semi-structured grids the LFA provides
accurate predictions for the asymptotic convergence rates of multigrid
methods for problems with other types of boundary
conditions as well.  The analysis is based on the Discrete Fourier transform,
and a good introduction to such analysis is found in the monographs by
Trottenberg et al.~\cite{TOS01}, and Wienands and
Joppich~\cite{2005WienandsR_JoppichW-aa}.

The main idea of the LFA is to formally extend all multigrid
components to an infinite grid, neglecting the boundary conditions,
and analyze discrete linear operators with constant coefficients. In
this way, the eigenfunctions of such operators are the eigenfunctions
of the shift operators, namely, $S_{\pm h}\varphi = \varphi(\cdot\pm
h)$, called Fourier components. If we assume that the error is a
linear combination of the Fourier components, then the behavior of a
multigrid algorithm can be studied by looking at the reduction on each
one of these components.  Although this analysis seems to be somewhat
heuristic, its practical value has been widely recognized. In general,
the LFA does not only provide accurate asymptotic convergence rates,
but also provides the means to select optimal components for the
multigrid algorithm.

In this section, we present a suitable local Fourier analysis
technique to derive quantitative estimates for the convergence of
multigrid methods with polynomial smoothers and aggressive
coarsening. In particular, a smoothing analysis for polynomial
smoothers and a two-level analysis by considering aggressive
coarsening from a grid with step-size $h$ to a coarse-grid of size
$2^k h$, are introduced next.  We begin by setting up the LFA
framework. We extend the discrete problem $A_h u_h = f_h$ to an
infinite grid
\begin{equation}\label{infinite_grid}
\Omega_h = \{{\mathbf x}=(x_1, x_2)\, \vert \, x_i=k_i h_i, \;k_i\in {\mathbb Z}, \ i=1,2 \},
\end{equation}
where ${\bf h} = (h_1, h_2)$ is the grid spacing.  From the definition
of the operators on $\Omega_h$, the discrete solution, its current
approximation and the corresponding error or residual can be
represented by formal linear combinations of the Fourier modes:
$\varphi_h({\bm \theta}, {\mathbf x}) ={\rm e}^{i \theta_1
  x_1} \, {\rm e}^{i \theta_2 x_2}$, with ${\mathbf x}\in \Omega_h$,
and ${\bm\theta}=(\theta_1,\theta_2)\in {\bm
  \Theta}_h= (-\pi/h_1,\pi/h_1]\times(-\pi/h_2,\pi/h_2]$. These grid
functions form a unitary basis for the space of bounded
functions on the infinite grid, and define the Fourier space
$$
{\mathcal F}(\Omega_h) := span \{\varphi_h({\bm \theta,\cdot})
\, \vert \, {\bm \theta}\in {\bm \Theta}_h\}.
$$
In this way, the behavior of the multigrid method can be analyzed by
evaluating the error reduction associated with a particular multigrid
component on the Fourier modes. Clearly, the discrete operator, and,
also the discrete error transfer operator have simpler form when
expressed in the Fourier basis. For instance, the symbol of $A_h$,
denoted by $\widetilde{A}_h$, is typically (block) diagonal, and each
diagonal element corresponds to a particular ``frequency''. In what
follows, we denote by $\widetilde{X}$ the Fourier symbol
of a given operator $X$.

To perform a smoothing or a two-grid analysis, we distinguish high-
and low-frequency components on $\Omega_h$. The classification of
``high'' and ``low'' here is done with respect to  the coarse
grid, since some Fourier components are not ``visible'' on the coarse
grid. Usually such ``invisible'' modes
are zero at the coarse grid degrees of freedom, or,
they are orthogonal (in an appropriate scalar product) to all Fourier modes
corresponding to the coarser grid.

Here, we consider aggressive coarsening techniques and the
coarse grid is denoted by $\Omega_{2^k h}$, with $k$ characterizing
how ``aggressive'' the coarsening is.  Then, the range of
frequencies with respect to this coarse grid is defined as
\begin{equation}\label{eq:frequencies}
\begin{array}{rcl}
\mbox{all frequencies:}\quad  &&
{\bm \Theta}_h= (-\pi/h_1,\pi/h_1]\times(-\pi/h_2,\pi/h_2],\\
\mbox{low frequencies:}\quad  &&{\bm  \Theta}_{2^k h} = (-\pi/(2^k
h_1),\pi/(2^k h_1)]\times (-\pi/(2^k h_2),\pi/(2^k h_2)],\\
\mbox{high frequencies:}\quad  && {\bm  \Theta}_h\backslash{\bm \Theta}_{2^k h}.
\end{array}
\end{equation}

In order to investigate the action of the smoothing operator on the
high-frequency error components we use a technique known as
smoothing analysis. 

We consider a splitting of the discrete operator $A_h =
A_h^+ + A_h^-$, where the splitting defines the smoothing iteration:
with a given initial guess $u_{h,0}$ we define $u_{h,k+1}$ in terms of
$u_{h,k}$ as follows:
\[
\bm{u}_{h,k+1} = \bm{u}_{h,k} + [A_h^{+}]^{-1}(f-A_h u_{h,k}).
\]
Smoothing analysis can be performed for many choices (symmetric or
non-symmetric) of $[A_h^+]^{-1}$, but to tie this to our earlier
discussion, the smoothers we are interested in are given by
$R_0=[A_h^+]^{-1}$ in \eqref{preconditioner-to-smoother} with SPD
$A_h^{+}$. For example, such smoothers are the Jacobi method, i.e.
$A_h^{+}=D=\text{diag}(A_h)$ and the $\ell_1$-smoother mentioned
earlier.

For the polynomial smoothers, which are of interest here, the error
propagation operator is
\begin{equation}\label{S_h}
S_h=I_h-\cq_m(X)X, \quad \mbox{with}\quad X=[A_h^+]^{-1}A_h
\end{equation}
where $\cq_m$ is a polynomial of degree $m$, positive on the spectrum of
$A_h$.
Since Fourier modes are eigenfunctions of the smoothing operator, we
can estimate the smoothing factor of $S_h$, i.e. the error reduction
in the space of high-frequencies $\widetilde{S}_h({\bm \theta})$, as follows
\begin{equation}\label{smoothing_factor}
\mu = \sup_{{\bm \Theta}_h\backslash{\bm \Theta}_{2^k h}} |\widetilde{S}_h^{\nu}({\bm \theta})|,
\end{equation}
where $\nu$ denotes the number of iterations of the relaxation
process.

\subsection{Two-grid LFA for aggressive coarsening}\label{sect:LFA_TL_aggresive}

The basis for the efficient performance of a multigrid method is the
interplay between the smoothing and the coarse-grid correction parts
of the algorithm. Thus, to get more insight in the behavior of a
multigrid algorithm, it is convenient to perform at least a two-grid
analysis which takes into account the influence of the components
involved in the coarse-grid correction. This is even more important when
aggressive coarsening strategies are applied, since the smoother
should be chosen according to the factor we are coarsening with. With
this purpose, in this section we present a two-grid local Fourier
analysis which considers an aggressive coarsening strategy from a grid
with step-size $h$ to a grid with step-size $2^k h$.

To perform this analysis, we consider the fine- and coarse-grids $\Omega_h$ and $\Omega_{2^k h}$, respectively.
It is well known that in the transition from the fine- to the coarse-grid, each low-frequency is coupled with several high-frequencies. In particular, for an arbitrary $k$, each low-frequency ${\bm \theta}\in \bm{\Theta}_{2^kh} = (-\pi/(2^kh),\pi/(2^kh)]\times (-\pi/(2^kh),\pi/(2^kh)]$ is coupled with $4^k-1$ high-frequencies by the coarse-grid correction operator. Because of this, the Fourier space can be subdivided into the corresponding $4^{k}-$dimensional subspaces ${\mathcal F}^{2^kh}({\bm \theta})$ which are generated by the Fourier modes associated with these $4^{k}$ frequencies, in the way that $\displaystyle{\mathcal F}(\Omega_h) = \bigoplus_{{\bm \theta}\in\bm{\Theta}_{2^kh}} {\mathcal F}^{2^kh}({\bm \theta})$.
Therefore, a block-matrix representation of the two-grid operator on the Fourier space can be obtained, which simplifies the computation of the spectral radius of the iteration matrix of the method, since the result should be the maximum of the spectral radius of the corresponding blocks.

Let $B_h^{2^k h}$ be the error propagation matrix of the considered two-grid method, that is, $e_h^{m+1} = B_h^{2^k h} e_h^{m}$, given by
\begin{equation}\label{two_grid_it}
B_h^{2^k h} = S_h^{\nu_2}C_h^{2^k h}S_h^{\nu_1} = S_h^{\nu_2}(I_h-P^h_{2^k h}(A_{2^k h})^{-1}R_h^{2^k h}A_h)S_h^{\nu_1},
\end{equation}
where $S_h$ is the iteration matrix associated with the smoother,
$\nu_1$\,, $\nu_2$ are, respectively, the number of pre- and
post-smoothing steps, and $C_h^{2^k h}$ is the coarse-grid correction
operator, mainly composed of the inter-grid transfer operators:
$P^h_{2^k h}$, $R_h^{2^k h}$ restriction and prolongation,
respectively, and the discrete operators $A_h$, $A_{2^k h}$ on the
fine and coarse grids, respectively.  For standard relaxation schemes
and, in particular, for the polynomial smoothers used here the
two-grid operator $B_h^{2^k h}$ leaves invariant the subspaces
${\mathcal F}^{2^kh}({\bm \theta})$. As a consequence this operator
can be represented by a block-diagonal matrix, consisting of
$(4^{k}\times4^{k})-$blocks, denoted by
$$\widetilde{B}_h^{2^k h}({\bm \theta}) = (\widetilde{S}_h({\bm
  \theta}))^{\nu_2}\widetilde{C}_h^{2^k h}({\bm
  \theta})(\widetilde{S}_h({\bm \theta}))^{\nu_1}.
$$
Here, ${\bm \theta}\in\bm{\Theta}_{2^kh}$, $\widetilde{S}_h({\bm
  \theta})$, and $\widetilde{C}_h^{2^k h}({\bm \theta})$ are the
block-matrix representations in the subspaces ${\mathcal
  F}^{2^kh}({\bm \theta})$ of the smoother and the coarse-grid
correction operator. The latter is computed from the Fourier
representations of the coarse-grid correction, namely,
\begin{equation}\label{cgc_symbol}
  \widetilde{C}_h^{2^k h} = \widetilde{I}_h-\widetilde{P}^h_{2^k h}(\widetilde{A}_{2^k h})^{-1}\widetilde{R}_h^{2^k h}\widetilde{A}_h.
\end{equation}
Now, the local Fourier analysis prediction for the asymptotic two-grid convergence factor of the method can be determined as:
\begin{equation}\label{def_rho}
\rho=\rho(B_h^{2^k h}) = \max_{{\bm \theta}\in\bm{\Theta}_{2^kh}}\rho
(\widetilde{B}_h^{2^k h}({\bm  \theta})).
\end{equation}

\subsection{Extension to triangular grids}\label{sect:triangular_grids}

To analyze the influence of grid-geometry on the behavior of the
multigrid method, in this section we extend the ideas to triangular
grids. We consider the discretization of the Laplace operator by
linear finite elements on a regular triangulation of a general
triangle.
\begin{figure}[htb]
\begin{center}
\includegraphics[width=9cm]{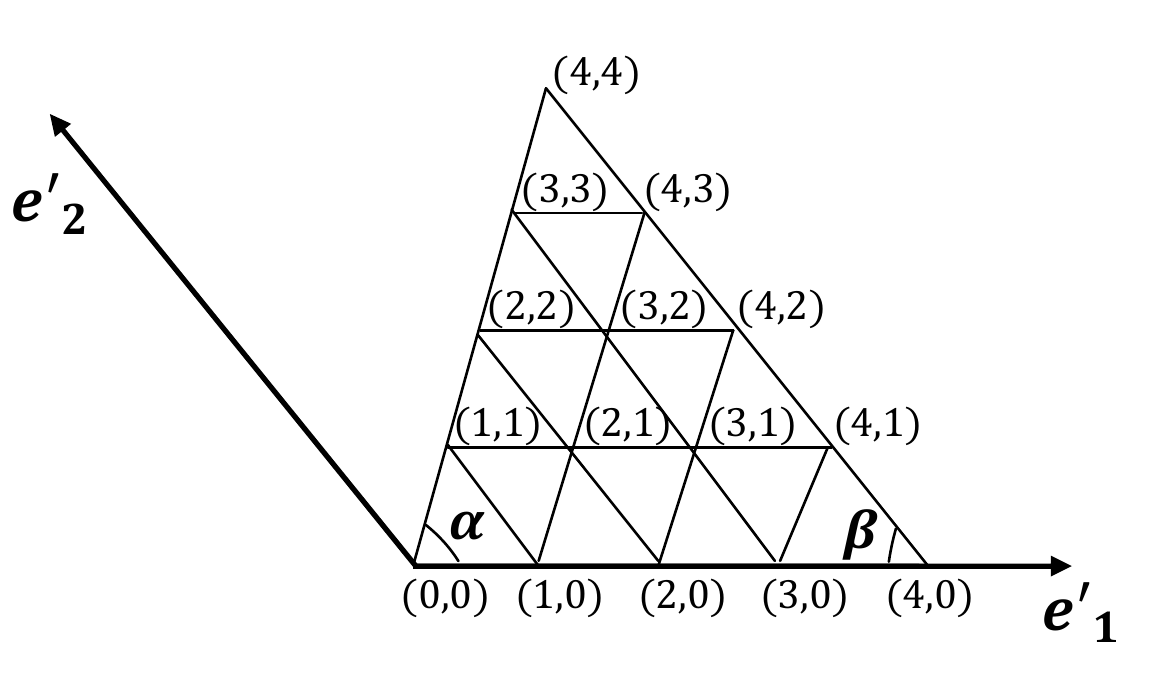}
\end{center}
\caption{Basis in ${\mathbb R}^2$ fitting the geometry of a triangular grid characterized by two angles, and local enumeration for the regular grid obtained after two refinement levels.}
\label{triangular_grid_angles}
\end{figure}
This triangular grid is characterized by two angles $\alpha$ and
$\beta$ and a local enumeration with double index is fixed by
considering a unitary basis of ${\mathbb R}^2$, $\{{\mathbf
  e}_1',{\mathbf e}_2'\}$, fitting the geometry of the triangular grid
(see Figure~\ref{triangular_grid_angles}).  We fix the
axis-orientation $\{{\mathbf e}_1',{\mathbf e}_2'\}$, the
corresponding enumeration of vertices as in
Figure~\ref{triangular_grid_angles}, and we set $\gamma =
\pi-\alpha-\beta$.  The stencil form of the discrete operator then reads
(see, e.g.~\cite[pp. 189--190]{2012RodrigoC_GasparF_LisbonaF-aa})
\begin{equation}\label{Laplace_triangular_grids}
  A_{h} = \ds\frac{\cot\alpha+\cot\beta}{h^2}
\begin{bmatrix}
    0 & -\cot\alpha & -\cot\beta\\
    -\cot\gamma & 2(\cot\alpha+\cot\beta+\cot\gamma) & -\cot\gamma\\
     -\cot\beta & -\cot\alpha & 0
\end{bmatrix}.
\end{equation}
Following \cite{Fourier_tri}, the local Fourier analysis applied to
discretizations on rectangular grids, can be extended to
discretizations on triangulations. The key to carrying out this
generalization is to define a new two-dimensional Fourier transform
using non-orthogonal bases. More precisely, we pick a spatial basis
that fits the structure of the grid (see
Figure~\ref{triangular_grid_angles}) and we chose its reciprocal basis
in the frequency space. In this way LFA on triangular grids is performed similarly
as on rectangular grids.


\section{Bounds via local Fourier analysis}\label{sect:bounds_LFA}
From the considerations in Section~\ref{sect:poly-smoothers} we know
that to construct smoothers based on scaled Chebyshev polynomials or
based on the \zipo polynomial we need to specify the interval
$[\lambda_0,\lambda_1]$. On this interval, as mentioned earlier the
polynomials have optimal properties and solve different minimization
problems.

In this section, we describe briefly how to optimize the choice of
$\lambda_0$ used in defining ${\mathcal Q}_m((A_h^+)^{-1}A_h)$.  One
simple choice is to use LFA for
$(\widetilde{A}_h^+(\bm{\theta}))^{-1}\widetilde{A}_h(\bm{\theta})$,
and define $\lambda_0$ and $\lambda_1$ as
\begin{equation}\label{lambda_0_1}
\lambda_{0} = \min_{\theta\in {\bm \Theta}_h\backslash{\bm \Theta}_{2^k h}} |(\widetilde{A}_h^+(\bm{\theta}))^{-1}\widetilde{A}_h(\bm{\theta})|, \qquad
\lambda_{1} = \max_{\theta\in {\bm \Theta}_h\backslash{\bm \Theta}_{2^k h}} |(\widetilde{A}_h^+(\bm{\theta}))^{-1}\widetilde{A}_h(\bm{\theta})|.
\end{equation}
This is a good choice, but is not optimal, as is evident from the
numerical results presented later. From the properties of the
classical Chebyshev polynomial smoothers and the \zipo smoother
introduced in \S\ref{sect:poly-smoothers} we see that two parameters,
$\lambda_0$ and $\lambda_1$, are used to define the smoother. Since we
would like to have scaling invariant smoothers we can fix one of these
parameters. For reasons which are evident from the analysis provided
in~\cite{amli-1, panayot-book} for the scaled Chebyshev smoother
and~\cite{2012KrausJ_VassilevskiP_ZikatanovL-aa} for the smoother
using the best approximation to $\frac1x$, the parameter that we fix
is $\lambda_1$. To optimize the performance of the smoother or the two
grid method, we try to adjust $\lambda_0$ in order to achieve one of
the following goals:
\begin{itemize}
\item get better smoothing properties, that is, minimize
  $|\widetilde{S}_h(\bm{\theta})|$ for $\bm{\theta}\in
  \bm{\Theta}_{h}\setminus\bm{\Theta}_{2^kh}$ (high frequency as
  defined in~\eqref{eq:frequencies}) to compensate for aggressive
  coarsening.
\item get the best possible two grid rate of convergence, namely, minimize
$|\widetilde{B}_h^{2^k h}(\bm{\theta})|$ for $\bm{\theta}\in \bm{\Theta}_h$.
\end{itemize}
We find an optimal value of $\lambda_0$ iteratively, choosing as
initial guess the values given in~\eqref{lambda_0_1}. This can be done
as explained in~the next section.  

\subsection{Optimal choice of $\lambda_0$}\label{sect:choice_lambda}
We now provide several of the relevant properties on this polynomial
smoother and discuss the choice of $\lambda_0$ for smoothers based on
the polynomial of best approximation of $x^{-1}$. For the estimates and
identities used below, we refer to
\cite{2012KrausJ_VassilevskiP_ZikatanovL-aa}. Let $\lambda_0>0$ be any
estimate of the minimal eigenvalue of $[A_h^+]^{-1} A_h$, for example
$\lambda_0=\lambda_1/\kappa$, for some $\kappa>1$.

We now discuss  the restrictions on the polynomial degree imposed
by the requirement that the smoother has certain error reduction and also
the requirement that the polynomial is positive on $(0,\lambda_1]$
(and as a consequence the matrix polynomial will be positive
definite). Further, let
\begin{eqnarray*}
E_m:= \max_{x\in [\lambda_0,\lambda_1]}|1-x\cq_m(x)|
=\max_{x\in [\lambda_0,\lambda_1]} x\cdot\left|\frac{1}{x}-\cq_m(x)\right|.
\end{eqnarray*}
As $\lambda_1$ is a point of Chebyshev alternance, \cite[Theorem~2.1
and Equation~(2.2)]{2012KrausJ_VassilevskiP_ZikatanovL-aa}, for the
error of approximation $E_m$ we have
\begin{eqnarray*}
E_m = \lambda_1
\left| \frac{1}{\lambda_1}- \cq_m(\lambda_1)\right|
= \left[\frac{2\lambda_1}{\lambda_1-\lambda_0}\right]\cdot \left[\frac{\delta^m}{a^2-1}\right] = \frac{2\kappa\delta^m}{(\kappa-1)(a^2-1)}.
\end{eqnarray*}
Here, we have denoted
\[
\kappa=\frac{\lambda_1}{\lambda_0},\quad
\quad \delta = \frac{\sqrt{\kappa}-1}{\sqrt{\kappa}+1},\quad
a = \frac{\kappa+1}{\kappa-1}.
\]
Computing the error $E_m$ then gives
\begin{equation}\label{eqn:max-lambda1}
E_m = \frac{\delta^m(\kappa-1)}{2}.
\end{equation}
Regarding the positivity, $\cq_m(\lambda_1)>0$, a sufficient
condition (and also necessary condition in many cases) is that
$\frac{1}{\lambda_1}-E_m >0$. Thus, we need to find the smallest $m$
such that both $E_m< \rho$, for a given $\rho$, and
$\cq_m(\lambda_1)>0$. We then have that the polynomial is
positive if
\begin{eqnarray*}
\frac{\delta^m(\kappa-1)}{2} \le  \frac{1}{\lambda_1}
\quad\Rightarrow\quad
\delta^m\le \frac{2}{\lambda_1(\kappa-1)}.
\end{eqnarray*}
We note that from this it follows that $R=\cq_m(A)$ and hence
$\bar{R}$ are symmetric and positive definite, implying that the
smoother is convergent in $A$-norm.

Also, a straightforward calculation shows that if we want a damping
factor less than $\rho$ on the interval $[\lambda_0,\lambda_1]$, we have
\begin{eqnarray*}
\frac{\delta^m(\kappa-1)}{2} \le  \rho
\quad\Rightarrow\quad
\delta^m\le \frac{2\rho}{\kappa-1}.
\end{eqnarray*}
Finally, the minimal $m$ that will have the desired properties satisfies
\begin{equation}\label{eq:bound-on-m}
m \ge
\frac{1}{|\log\delta|}\max\left\{ \left|\log \frac{2\rho}{\kappa-1}\right|,
\left|\log \frac{2}{\lambda_1(\kappa-1)}\right|
\right\}.
\end{equation}
Recall that $\lambda_1$ is fixed, and, hence,  both $\kappa$ and the polynomial degree $m$ are determined by
  $\lambda_0$. In short, once we choose $\lambda_0$ we can
  calculate the degree of the polynomial so that the resulting
  smoother has a  guaranteed convergence rate.
  This result is of interest in the context of aggressive coarsening
  since it allows to choose $\lambda_0$ in accordance with the
  coarsening ratio and the smoothing rate. Choosing $\lambda_0$ to
  optimize these two parameters naturally leads to computationally optimal methods.

Since the convergence rate of the polynomial smoother is determined by
$\max_{x \in [\lambda_0, \lambda_1]} | 1- x \cq_m(x) |$, the choice of
$\lambda_0$ using LFA (see~\eqref {lambda_0_1})
does not guarantee the optimal convergence rate of the polynomial
smoother.  Instead, we consider fixing $\lambda_1$ and the polynomial degree $m$, and finding the best lower bound, $\lambdastar$, that solves the following min-max problem:
\begin{equation} \label{eqn:min-optimal-lambda}
\min_{\lambda \in [\lambda_0, \lambda_1]} \max_{x \in [\lambda_0, \lambda_1]} | 1- x \cq_m(x; \lambda) | \longrightarrow \lambdastar ,
\end{equation}
where $\cq_m(x; \lambda)$ denote the best approximation of $1/x$ on
the interval $[\lambda, \lambda_1]$.  Next we will explain how to
solve this minimization problem and, therefore, obtain the optimal
$\lambdastar$.  First, the following lemma shows that the $| 1 - x
\cq_m(x; \lambda)| $, $\lambda \in [\lambda_0, \lambda_1] $ achieves
its maximum at the end points $\lambda_0$ or $\lambda_1$.

\begin{lemma}\label{lem:max-end}
If $\lambda \in [\lambda_0, \lambda_1]$, and $\cq_m(x; \lambda)$ is the best polynomial approximation to $1/x$ on $[\lambda, \lambda_1]$, then
\begin{equation}\label{eqn:max-end}
\max_{x \in [\lambda_0, \lambda_1]} |1 - x \cq_m(x, \lambda)| = \max \{  |1 - \lambda_1 \cq_m(\lambda_1; \lambda)|, | 1- \lambda_0 \cq_m(\lambda_0; \lambda) |   \}.
\end{equation}
\end{lemma}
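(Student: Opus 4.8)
The plan is to analyze the function $x \mapsto p_{m+1}(x;\lambda) := 1 - x\,\cq_m(x;\lambda)$ on the interval $[\lambda_0,\lambda_1]$ and show that its absolute value cannot have an interior maximum exceeding the values at the two endpoints $\lambda_0$ and $\lambda_1$. The key structural fact I would invoke is the one already recorded in the excerpt (citing \cite[Theorem~2.1 and Equation~(2.2)]{2012KrausJ_VassilevskiP_ZikatanovL-aa}): since $\cq_m(\cdot;\lambda)$ is the best uniform approximation to $1/x$ on $[\lambda,\lambda_1]$, the error $1/x - \cq_m(x;\lambda)$ satisfies a Chebyshev alternance on $[\lambda,\lambda_1]$, and in fact $p_{m+1}(x;\lambda)$ has an explicit representation in terms of a shifted/scaled Chebyshev polynomial $T_{m+1}$ on that subinterval. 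From this representation one reads off that on $[\lambda,\lambda_1]$ the quantity $|p_{m+1}(x;\lambda)|$ is bounded by its alternance value $E_m$, which is attained precisely at the alternance points, including the right endpoint $\lambda_1$.

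The interval we actually care about is $[\lambda_0,\lambda_1]$, which is strictly larger than $[\lambda,\lambda_1]$ on the left when $\lambda > \lambda_0$. So the real work is to control $|p_{m+1}(x;\lambda)|$ on the ``extra'' piece $[\lambda_0,\lambda]$. Here I would use monotonicity: from the Chebyshev representation, $p_{m+1}(x;\lambda)$ restricted to $x \le \lambda$ is (up to sign) a Chebyshev polynomial evaluated at an argument that lies \emph{outside} $(-1,1)$, and by the monotonicity of $|T_{m+1}(t)|$ for $t \notin (-1,1)$ — a fact explicitly stated in the excerpt just after the recurrence for $T_k$ — the function $|p_{m+1}(x;\lambda)|$ is monotone on $[\lambda_0,\lambda]$. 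Consequently its maximum over $[\lambda_0,\lambda]$ is attained at one of the two endpoints of that subinterval, i.e.\ at $\lambda_0$ or at $\lambda$; and the value at $\lambda$ is itself bounded by (indeed equals, being an alternance point) the value governing $[\lambda,\lambda_1]$. Combining the two pieces, the overall maximum over $[\lambda_0,\lambda_1]$ is the larger of $|1-\lambda_0\cq_m(\lambda_0;\lambda)|$ and $|1-\lambda_1\cq_m(\lambda_1;\lambda)|$, which is exactly \eqref{eqn:max-end}.

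I expect the main obstacle to be pinning down precisely the argument of $T_{m+1}$ in the representation of $p_{m+1}(x;\lambda)$ and checking that, for $x \in [\lambda_0,\lambda]$, this argument indeed stays on one side of $1$ (so that the monotonicity of $|T_{m+1}|$ applies cleanly and there is no sign change or interior extremum in $[\lambda_0,\lambda]$). This amounts to verifying that the affine map sending $[\lambda,\lambda_1]$ onto $[-1,1]$ carries $[\lambda_0,\lambda]$ into $[1,\infty)$, which is immediate once the map is written down but is the one place where one must be careful about orientation and the value at $\lambda_1$ versus $\lambda$. The two remaining ingredients — the alternance/explicit-formula statement on $[\lambda,\lambda_1]$ and the monotonicity of $|T_{m+1}|$ outside $(-1,1)$ — are both quoted verbatim from the material already in the paper, so no new estimates are needed there.
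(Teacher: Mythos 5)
Your split of $[\lambda_0,\lambda_1]$ into $[\lambda_0,\lambda]\cup[\lambda,\lambda_1]$ and your treatment of the right piece (alternance, with the weight $x$ maximal at $\lambda_1$, so the maximum of $|1-x\cq_m(x;\lambda)|$ over $[\lambda,\lambda_1]$ sits at $\lambda_1$) coincide with the paper's argument. The gap is in the left piece. Your key structural claim --- that $1-x\,\cq_m(x;\lambda)$ is, up to sign and scale, $T_{m+1}$ composed with the affine map sending $[\lambda,\lambda_1]$ onto $[-1,1]$ --- is the formula for the \emph{shifted/scaled Chebyshev smoother} of~\eqref{cheb-1}, not for the polynomial in this lemma, which is the best uniform approximation to $1/x$ from~\eqref{1x-1-min}. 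For that polynomial it is the unweighted error $1/x-\cq_m(x;\lambda)$ that equioscillates, and $1-x\,\cq_m(x;\lambda)$ is $x$ times an error function whose explicit form (derived later in the paper) is a combination of $U_{m-1}$ and $U_{m-2}$ with $x$-dependent coefficients, not a single $T_{m+1}$ at an affine argument. Hence ``$|T_{m+1}|$ is monotone outside $(-1,1)$'' cannot be invoked to conclude monotonicity of $|1-x\,\cq_m(x;\lambda)|$ on $[\lambda_0,\lambda]$, and your plan for the only nontrivial part of the proof does not go through as written. (A symptom of the same conflation: you assert the value at $\lambda$ ``equals'' the governing value on $[\lambda,\lambda_1]$; in fact $|1-\lambda\cq_m(\lambda;\lambda)|=\lambda E$ while $|1-\lambda_1\cq_m(\lambda_1;\lambda)|=\lambda_1 E$, where $E$ is the alternance level of $1/x-\cq_m$ --- only the inequality you need, not equality, holds.)

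The paper closes this left-interval step differently: it cites \cite[Lemma~3.1 and Equation~(3.5)]{2012KrausJ_VassilevskiP_ZikatanovL-aa} for the monotone behavior of $\cq_m(\cdot;\lambda)$ (equivalently of the error) on $(0,\lambda]$, which yields that the maximum over $[\lambda_0,\lambda]$ is attained at $\lambda_0$. If you want a self-contained substitute rather than the citation, a zero-counting argument works: equioscillation gives $1/x-\cq_m(x;\lambda)$ at least $m+1$ sign changes in $(\lambda,\lambda_1)$, so the degree-$(m+1)$ polynomial $1-x\,\cq_m(x;\lambda)$ (which equals $1$ at $x=0$) has \emph{all} its zeros there; by Rolle its derivative, of degree $m$, also has all its zeros in $(\lambda,\lambda_1)$, so $1-x\,\cq_m(x;\lambda)$ is strictly monotone on $(0,\lambda]$ and its maximum modulus on $[\lambda_0,\lambda]$ is attained at an endpoint, with the value at $\lambda$ dominated by that at $\lambda_1$. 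Either route repairs your step; the Chebyshev-representation route you proposed does not apply to this polynomial.
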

\begin{proof}
  We first consider $x\in[\lambda, \lambda_1]$ and since $\cq_m(x;
  \lambda)$ is the best polynomial approximation to $1/x$ on
  $[\lambda, \lambda_1]$, we have that $\lambda_1$ is a point of Chebyshev
  alternance. Therefore,
\begin{equation*}
\max_{x \in [\lambda, \lambda_1]} | 1- x \cq_m(x; \lambda) | = | 1 - \lambda_1 \cq_m(\lambda_1; \lambda) |.
\end{equation*}
On the other hand, for $x \in [\lambda_0, \lambda]$, as shown in~\cite[Lemma~3.1 and
Equation~(3.5)]{2012KrausJ_VassilevskiP_ZikatanovL-aa}, $\cq_m(x; \lambda)$ is strictly decreasing on $(0, \lambda]$, and therefore,
\begin{equation*}
\max_{x\in[\lambda_0, \lambda]} | 1 - x \cq_m(x; \lambda) | =  | 1 - \lambda_0 \cq_m(\lambda_0; \lambda)  |.
\end{equation*}
Then~\eqref{eqn:max-end} follows directly and the proof is complete.
\end{proof}

Using Lemma~\ref{lem:max-end}, the minimization problem~\eqref{eqn:min-optimal-lambda}~can be simplified as following
\begin{equation}\label{eqn:opt-lambda}
\min_{\lambda \in [\lambda_0, \lambda_1]}  \max \{  |1 - \lambda_1 \cq_m(\lambda_1; \lambda)|, | 1- \lambda_0 \cq_m(\lambda_0; \lambda) |   \}.
\end{equation}

\subsection{Existence of optimal parameter}
In this subsection we provide convincing evidence that an optimal
$\lambdastar$ exists. We have not stated this as a theorem because we
have used Mathematica to verify the positivity of a derivative in the
last step of the proof below. We have the following result.

\begin{result}\label{thm:best-lower}
If $\lambda \in [\lambda_0, \lambda_1]$ and $\cq_m(x; \lambda)$ is the best polynomial approximation to $1/x$ on $[\lambda, \lambda_1]$, then there is a unique minimizer $\lambdastar$ of the minimization problem~\eqref{eqn:min-optimal-lambda}, and
\begin{equation}\label{eqn:opt-lambda}
|1 - \lambda_1 \cq_m(\lambda_1; \lambdastar )| = | 1- \lambda_0 \cq_m(\lambda_0; \lambdastar ) |.
\end{equation}
\end{result}
Here we outline an argument which at the last step uses a
bound verified numerically. As we have observed earlier, since $\lambda_1$ is a point of
Chebyshev alternance, we have
\begin{equation*}
| 1 - \lambda_1 \cq_m(\lambda_1; \lambda) | = \frac{  (\sqrt{\frac{\lambda_1}{\lambda}} -1 )^m (\frac{\lambda_1}{\lambda} -1)}{2 (\sqrt{\frac{\lambda_1}{\lambda}} + 1)^m },
\end{equation*}
As it can be easily verified, the right side of this equation is a
strictly decreasing function when $\lambda \in [\lambda_0, \lambda_1]$.

As $ | 1- \lambda_0 \cq_m(\lambda_0; \lambda)  | = {\lambda_0} | 1/\lambda_0 - \cq_m(\lambda_0; \lambda)  | $, we will focus on $\mathcal{E}_m ( x; \lambda ) = | 1/x - \cq_m(x; \lambda) | $.  According to the three term recurrence of $\cq_m(x; \lambda)$~\cite[
Equation~(2.13)]{2012KrausJ_VassilevskiP_ZikatanovL-aa}, we have
\begin{equation*}
\cq_{m+1}(x; \lambda) - \cq_{m}(x; \lambda) = \delta^2 (\cq_m(x; \lambda) - \cq_{m-1}(x; \lambda)  )  + c (1 - x \cq_{m}(x; \lambda)),
\end{equation*}
where $\delta = \frac{\sqrt{\kappa} - 1}{\sqrt{\kappa} + 1}$ with $\kappa = \frac{\lambda_1}{\lambda}$ and $c = \frac{4 \sqrt{\mu_0} \sqrt{\mu_1}}{(\sqrt{\mu_0} + \sqrt{\mu_1})^2}$ with $\mu_0 = \frac{1}{\lambda_1}$ and $\mu_1 = \frac{1}{\lambda}$.  Then we have the following three term recurrence of $\mathcal{E}_m(x; \lambda)$
\begin{equation*}
\mathcal{E}_{m+1}(x;\lambda) = 2 y \delta \mathcal{E}_{m}(x; \lambda) - \delta^2 \mathcal{E}_{m-1}(x; \lambda),
\end{equation*}
where $y = \frac{1 + \delta^2 - cx}{2\delta}$.  Because $\mathcal{E}_m$ satisfies the homogeneous three term recurrence relationship, we have the following explicit formula
\begin{equation*}
\mathcal{E}_m(x; \lambda) = - \delta^m \mathcal{E}_0(x;\lambda) U_{m-2}(y) + \delta^{m-1} \mathcal{E}_1(x; \lambda) U_{m-1}(y),
\end{equation*}
where
\begin{equation*}
\mathcal{E}_0(x; \lambda) = \frac{1}{x} - \frac{1}{2}(\mu_0 + \mu_1); \quad \mathcal{E}_{1}(x; \lambda) = \frac{1}{x} - \left( \frac{1}{2} (\sqrt{\mu_0} + \sqrt{\mu_1})^2 - \mu_0 \mu_1 x \right).
\end{equation*}
Using the software package \texttt{Mathematica} (see the output below), we can
verify that $\mathcal{E}_m(\lambda_0; \lambda) > 0$ and is a strictly
increasing function when $\lambda \in [\lambda_0, \lambda_1]$, and
therefore, so is
\begin{equation*}
| 1 - \lambda_0 \cq_m(\lambda_0; \lambda) | = \lambda_0 \mathcal{E}_m(\lambda_0; \lambda).
\end{equation*}
Therefore, because $ | 1- \lambda_1 \cq_m(\lambda_1; \lambda)| $ is
strictly decreasing and $ | 1 - \lambda_0 \cq_m(\lambda_0; \lambda) |
$ is strictly increasing when $\lambda \in [\lambda_0, \lambda_1]$, $
\min_{\lambda \in [\lambda_0, \lambda_1]}\max \{ |1 - \lambda_1 \cq
_m(\lambda_1; \lambda)|, | 1- \lambda_0 \cq_m(\lambda_0; \lambda) |
\}$ has a unique minimizer $\lambdastar$
satisfying~\eqref{eqn:opt-lambda}.  Moreover, according to
Lemma~\ref{lem:max-end}, this minimizer $\lambdastar$ is the unique
minimizer of the minimization problem~\eqref{eqn:min-optimal-lambda}.

Thus, Result~\ref{thm:best-lower} and~\eqref{eqn:opt-lambda}, show
that to find $\lambdastar$ satisfying~\eqref{eqn:opt-lambda}
we need to solve a one-dimensional nonlinear
equation. This value of $\lambdastar$ results in the best convergence
rate for the polynomial smoother for the given range of frequencies.


At the end of this subsection, we provide the \texttt{Mathematica}
code which is used to verify Result \ref{thm:best-lower}.  Without
loss of generality, we assume that $\lambda_1 = 1$, introduce $t =
\sqrt{\kappa}$, and change the formulas of $y$, $\mathcal{E}_0$, and
$\mathcal{E}_1$ correspondingly.  In the \texttt{Mathematica} code, we
first input the variables and then compute the derivative of
$\mathcal{E}_m$ with respect to $t$.

\begin{shaded}\begin{code}
In[1] := delta = (t-1)/(t+1);
	 y = (1+t^2 - 2*t*x)/(t^2 -1);
	 E0 = -0.5*t^2 + ((1/x) - 0.5);
	 E1 = -0.5*(t+1)^2 + t^2*x + 1/x;
	 Em = delta^(m-1)*E1*ChebyshevU[m-1,y]
	       - delta^m*E0*ChebyshevU[m-2,y];
	 Minimize[{Em, 1$\leq$t$\leq$1/Sqrt[x], 0$<$x$\leq$1, 3$\leq$m}, {t, x, m}];
Out[2] = $\left \{3.51279 \times 10^{-22}, \ \{ \text{t} \rightarrow 1.03272, \ \text{x} \rightarrow 0.917826, \ \text{m} \rightarrow 18.11 \} \right\} $
In[3] := Maximize[{$\partial_x$(Em), 1$\leq$t$\leq$1/Sqrt[x], 0$<$x$\leq$1, 3$\leq$m}, {t, x, m}];
Out[4] = $\left \{-3.77826 \times 10^{-48}, \ \{ \text{t} \rightarrow 1.0027, \ \text{x} \rightarrow 0.919534, \ \text{m} \rightarrow 43.6199 \} \right\} $
\end{code}
\end{shaded}

From the output displayed above, we can see that $\mathcal{E}_m > 0$
and the derivative of $\mathcal{E}_m$ with respect to $t$ is negative
and, therefore, $\mathcal{E}_m$ is strictly decreasing, which means
$\mathcal{E}_m$ is strictly increasing with respect to $\lambda$.
This concludes the heuristic justification of Result~\ref{thm:best-lower}.

\section{Numerical Experiments\label{sect:numerics}}

In this section we present numerical results on smoothing properties
of the polynomial smoothers and results on two- and multi-level
methods which use these smoothers in combination with aggressive
coarsening strategy.  When the polynomial smoother is the best
polynomial approximation of $x^{-1}$ or the Chebyshev polynomial, the
tests also include study of the behavior of the smoothers and the
multigrid algorithms with respect to the parameters involved in
defining these polynomial smoothers (e.g. $\lambda_0$ as discussed
earlier in Section~\ref{sect:bounds_LFA}).

To fix ideas we consider as our model problem the Poisson equation on
a domain $\Omega$ with homogeneous Dirichlet boundary
conditions
\begin{eqnarray}\label{model_problem}
{\mathcal A}u({\mathbf x}) = -\Delta u({\mathbf x}) &=& f({\mathbf x}),\quad {\mathbf x}\in \Omega \subseteq {\mathbb R}^d, \; d = 2,3,\\
u({\mathbf x}) &=& 0, \quad {\mathbf x} \in \partial \Omega.\label{model_problem_bc}
\end{eqnarray}
The domains $\Omega$ which we consider are the unit cubes in
$\Reals{d}$, $d=2,3$ or a triangular domain in $\Reals{2}$.  We group
the numerical tests in this section as follows. In
Section~\ref{sec:smooth} we present results on the smoothing property
of the polynomial smoothers. The two-grid and multigrid results are
shown in Section~\ref{sec:cycles} . As discretization technique on
rectangular grids we use the standard $5$-point finite difference
discretization of
problem~(\ref{model_problem})-(\ref{model_problem_bc}). At the end we
also show that our results can be easily generalized to continuous
linear finite element discretizations on triangular grids (see
Section~\ref{sect:triangular_numerics}, and
Section~\ref{sect:triangular_grids}).  In the computations, we use
bilinear interpolation for rectangular grids and linear interpolation
on triangular grids.

One notation that is needed is on the corresponding cycling
strategy. We call $k$-coarsening a coarsening procedure which results
in a coarser grid size of $2^kh$ if the size of the fine grid is $h$.
For multigrid method with total number of levels $L$, this means that
the grid size on level $\ell$ is $2^{k\ell}h_L$ if $L$-th level is the
finest grid level.  Such coarsening is called aggressive since a
coarser grid  has $2^{-kd}n$ degrees of freedom if the
spatial dimension is $d$ and the fine grid has $n$ degrees of freedom.

\subsection{Comparison of smoothing rates}\label{sec:smooth}
The crucial role of the smoothing as one main component in a multigrid
process is well known (see the classical paper by
A.~Brandt~\cite{1977BrandtA-aa}). 
In fact, as shown in this pioneering work on multigrid methods, the
smoothing analysis can provide a first, and often accurate, estimate
on the convergence factor of the overall method.

In this section, we compare the smoothing properties of the polynomial
smoothers introduced in Section~\ref{sect:poly-smoothers}: (a) the
scaled classical Chebyshev polynomial; (b) the smoothed aggregation
polynomial, denoted here with (SA); and (c) the best polynomial
approximation to $x^{-1}$, denoted here with (\zipo).  For the latter
polynomial we provide two different values for the lower bound of the
eigenvalue interval, $\lambda_0$: (1) the choice of $\lambda_0$ given
by the LFA (see~\eqref{lambda_0_1}); and (2) the optimal choice of
$\lambda_0$, given in Section~\ref{sect:choice_lambda}, and denoted by
$\lambdastar$.  The results for the Chebyshev polynomial smoothers
  are for the interval $\lbrack \lambda_0,\lambda_1\rbrack$ with the value of
  $\lambda_0$ determined from~\eqref{lambda_0_1}. In Table~\ref{tab:poly_smooth_compare2d}, we present the
smoothing factors predicted by the local Fourier analysis for these
polynomial smoothers in $2D$. For the LFA estimates we specify as
``low frequencies'' the eigenmodes corresponding to a lattice of size
$2^k h$, where $h$ is the fine grid mesh size. In this setting, larger
values of $k$ correspond to more aggressive coarsening.  For the case
of \zipo smoother this in turn requires higher degree of the
polynomial.
\begin{table}[htdp]
\caption{Smoothing rate comparison of different polynomial smoothers ($\lambda_1 = 2$) in 2D}\label{tab:poly_smooth_compare2d}
\begin{center}
\begin{tabular}{|l|c|c|c|c|c|c|}
\hline \hline
 & Chebyshev & SA & \zipo & \zipo ($\lambdastar$) &  $\lambda_0$  &$\lambdastar$ \\ \hline
$k=1$, Degree $= 2$ &  0.074	       & 0.233  &	0.167  & 0.100 &  0.500 & 0.598  \\
$k=2$, Degree $= 6$ &  0.041	       & 0.221  &	0.226  & 0.086 &  0.146 &0.202  \\
$k=3$, Degree $= 17$ &  0.014	       & 0.172  &	0.230  & 0.053 &  0.038 &0.057  \\
\hline \hline
\end{tabular}
\end{center}
\end{table}%
We use the same degree for the Chebyshev and also SA polynomials and
we observe that the scaled Chebyshev polynomial provides the best
smoothing factors for any of the choices of aggressive
coarsening. Also we observe that choosing the optimal value of
$\lambdastar$ for the smoothing of the $1/x$ polynomial improves
significantly the smoothing rate for this smoother.

Next, similar results are presented in
Table~\ref{tab:poly_smooth_compare3d} for the three-dimensional
case. Again, we can draw the same conclusions since Chebyshev
polynomial results in the best smoothing rates (even better than in
the two-dimensional case) and the optimal choice of
$\lambda_0=\lambdastar$ appears to be crucial to obtain good convergence
factors for the \zipo smoother.
\begin{table}[htdp]
  \caption{Smoothing rate comparison of different polynomial smoothers ($\lambda_1
    = 2$) in 3D}\label{tab:poly_smooth_compare3d}
\begin{center}
\begin{tabular}{|l|c|c|c|c|c|c|}
\hline \hline
	    			 & Chebyshev & SA & \zipo & \zipo ($\lambdastar$) &  $\lambda_0$  &$\lambdastar$ \\ \hline
$k=1$, Degree $= 3$ &  0.062  & 0.227  &  0.185 & 0.097 &  0.333& 0.419 \\
$k=2$, Degree $= 9$ &  0.022  & 0.215  &  0.171 & 0.059 & 0.976 &  0.134\\
$k=3$, Degree $= 22$ & 0.011 &  0.148 & 0.268 & 0.051 & 0.025 & 0.039 \\
\hline \hline
\end{tabular}
\end{center}
\end{table}%

\subsection{Multigrid cycles}\label{sec:cycles}
While the smoothing analysis results presented in previous section
give us some insight about the performance of the method, to properly
study the behavior of multigrid algorithm we need to involve the
coarse grid correction into the tests.  We first present a two-grid
analysis (LFA) which takes into account the effect of transfer
operators and the rest of the components of the coarse-grid correction
part of the algorithm. We report the two-grid convergence rates
obtained by LFA as well as the performance of a W-cycle solver using
the polynomial smoothers. We present the results for two-grid method
and W-cycle for $1$-coarsening, i.e.  all levels are involved in the
coarse grid correction and there is no skipping of levels.

Next, we show results on the behavior of multigrid V-cycles. The tests are
carried out for $k$-coarsening strategies, with $k=1,2,3$, namely we
study the performance of the algorithms with respect to coarsening
strategy with different ``aggressiveness''.

For the two- and multi-grid tests we only compare Chebyshev polynomial
and $1/x$ polynomial since from the previous smoothing rate tests, we
can assert that these two polynomials outperform the SA polynomials as
smoothers. Also all the convergence factors presented in the
tables are computed with only one smoothing step.

In the tables below, we denote by $\rho_{LFA}$ the two-grid
convergence factors predicted by LFA.  The results for the
$k$-coarsening strategies are presented in
Table~\ref{tab_two_grid_2D}. In order to validate these results, we
also show in this table the corresponding experimentally computed
asymptotic two-grid convergence factors $\rho_W$. The lower and upper
bounds of the eigenvalue interval $[\lambda_0,\lambda_1]$ provided by
the LFA are also given, together with the optimal value $\lambdastar$,
computed as in~(\ref{eqn:opt-lambda}), and with the minimal degree of
the polynomial smoothers.  We observe that according to
Table~\ref{tab_two_grid_2D} the local Fourier analysis provides sharp
estimates for the convergence rates in all the cases.

Finally, we remark on the choice of parameter $\lambdastar$. As seen
this choice improves the convergence
factors provided by \zipo smoother. The improvement is more
significant for $k$-coarsening with larger $k$. This makes the
convergence rates of \zipo very close to those given by
the best option, namely, the scaled and shifted classical Chebyshev polynomial.
\begin{table}[htdp]
\caption{Two-grid convergence rate for different polynomial smoothers in 2D}
\begin{center}
\begin{tabular}{|c|c|c|c|c|c|c|c|c|c|c|}
\hline
\hline
\multicolumn{5}{|c|}{} & \multicolumn{2}{c|}{Chebyshev} & \multicolumn{2}{c|}{\zipo} & \multicolumn{2}{c|}{\zipo($\lambdastar$)} \\ \hline
&  $\lambda_0$ & $\lambdastar$ & $\lambda_1$ & Degree & $\rho_{LFA}$ & $\rho_{W}$ & $\rho_{LFA}$ & $\rho_{W}$ & $\rho_{LFA}$ & $\rho_{W}$\\  \hline
$k=1$ &	$0.5$	& $0.598$ & $2.0$ & $2$  & $0.125$ & $0.126$ & $0.166$ & $0.166$ & $0.134$ & $0.134$ \\	
$k=2$ &	$0.146$	& $0.202$ & $2.0$ & $6$  & $0.156$ & $0.155$ & $0.221$ & $0.225$ & $0.166$ & $0.165$ \\	
$k=3$ &	$0.038$ & $0.057$ & $2.0$ & $17$ & $0.137$ & $0.137$ & $0.227$ & $0.227$ & $0.148$ & $0.149$ \\	
\hline \hline
\end{tabular}
\end{center}
\label{tab_two_grid_2D}
\end{table}%
It is also interesting to choose a value of $\lambda_0$ (left end of
the high frequency interval) which is optimal with respect to the
overall two-grid convergence of the method. Note that, this is
a different procedure than what we have done before: choosing $\lambdastar$ such
that the smoothing factor is the best possible. This value has been
computed by using the local Fourier analysis, for both Chebyshev and
\zipo polynomials. More precisely, we find the optimal $\lambdastar$
  not only by monitoring the convergence of the smoother, but also the
convergence of the two-grid method. In Table~\ref{tab_optimal_TL} we present
two-grid convergence factors for this case as predicted by the LFA and we
also show the experimentally computed W-cycle rates.  We observe
results similar to the ones already discussed in Table~\ref{tab_two_grid_2D}.
\begin{table}[htdp]
  \caption{Two-grid convergence rate for Chebyshev and \zipo
    polynomial smoothers.  The
    value of $\lambda_0$ is chosen by applying LFA for the overall
    convergence factor of the two-grid method.
}
\begin{center}
\begin{tabular}{|c|c|c|c|c|c|c|c|c|}
\hline
\hline
\multicolumn{3}{|c|}{} & \multicolumn{3}{c|}{Chebyshev($\lambda_0^{Optimal-TL}$)} & \multicolumn{3}{c|}{\zipo($\lambda_0^{Optimal-TG}$)} \\ \hline
& $\lambda_1$ & Degree &  $\lambda_0$ & $\rho_{LFA}$ & $\rho_{W}$ & $\lambda_0$ & $\rho_{LFA}$ & $\rho_{W}$ \\  \hline
$k=1$ &	$2.0$ & $2$	 & $0.405$ & $0.111$ & $0.113$ & $0.550$ & $0.128$ & $0.128$ \\
$k=2$ &	$2.0$ & $6$	 & $0.095$ & $0.138$ & $0.140$ & $0.167$ & $0.152$ & $0.153$ \\
$k=3$ &	$2.0$ & $17$ & $0.019$ & $0.100$ & $0.100$ & $0.045$ & $0.133$ & $0.133$ \\
\hline \hline
\end{tabular}
\end{center}
\label{tab_optimal_TL}
\end{table}%

Further, we test the behavior or V-cycle for the different polynomial
smoothers and different coarsening strategies with varying
  coarsening ratios. We consider a V-cycle with one pre- and one
post-smoothing step. The corresponding results are shown in
Table~\ref{V_cycle_2D}, and we observe that for both polynomials the
obtained convergence rates are $\approx 0.1$, that is, we have fast
convergence reducing the error in energy norm by an order of magnitude
per iteration.
\begin{table}[htdp]
  \caption{Multigrid convergence rates for a V(1,1)-cycle in
    2D. The
    value of $\lambda_0$ is chosen by LFA
    (equation~\eqref{lambda_0_1}).
}
\begin{center}
\begin{tabular}{|c|c|c|c|c|c|}
\hline \hline
	  & $\lambda_0$ & $\lambdastar$ & \zipo ($\lambda_0$) & \zipo ($\lambdastar$) & Chebyshev \\  \hline
$k=1$, Degree$=2$&    0.5        &   0.598   &  0.103     &  0.114  &        0.111 \\
$k=2$, Degree$=6$&    0.146   &   0.202   &   0.088    &   0.103   &        0.098		 \\
$k=3$, Degree$=17$&    0.038 &   0.057   &   0.069    &   0.083   &        0.076	 \\
\hline \hline
\end{tabular}
\end{center}
\label{V_cycle_2D}
\end{table}%

In regard to the three-dimensional case the tests
provide a solid basis for conclusions analogous to the ones obtained
for the two-dimensional case.
The results for V(1,1)-cycle in three dimensions are shown in
Table~\ref{V_cycle_3D}.
Also, as in the 2D case, we include results on the behavior with
respect to the choice of the parameter $\lambda_0$, computed by
LFA, and also its optimal version, $\lambdastar$ given
by~(\ref{eqn:opt-lambda}). As in the 2D case we
observe excellent convergence rates.

\begin{remark}
  We remark that in Table~\ref{V_cycle_2D} and
  Table~\ref{V_cycle_3D} the value of $\lambdastar$ is calculated so
  that it optimizes the smoothing factor only. However, since the overall
  algorithm involves in addition a coarse grid correction, we cannot expect
  that $\lambdastar$ will yield an optimal convergence rate for
  the resulting multilevel method (since it does not take the coarse grid
  correction into account).  In short, the proposed choice for the value of $\lambdastar$
  guarantees a better smoothing rate, but this does not necessarily lead to a better V-cycle
  convergence rate. This is shown in Table~\ref{V_cycle_2D} and
  Table~\ref{V_cycle_3D}. We note that the effects of this observed phenomenon are minimal in practice and in general the convergence
  rates of the methods for $\lambda_0$ and for $\lambdastar$ are generally very close.
\end{remark}

\begin{table}[htdp]
\caption{Multigrid convergence rates for a V(1,1)-cycle in 3D.
The
    value of $\lambda_0$ is chosen by LFA
    (equation~\eqref{lambda_0_1}).
}
\begin{center}
\begin{tabular}{|c|c|c|c|c|c|}
\hline \hline
& $\lambda_0$ & $\lambdastar$ & \zipo ($\lambda_0$) & \zipo ($\lambdastar$) & Chebyshev \\  \hline
$k=1$, Degree$=3$&    0.333  &   0.419   &  0.101     &  0.115  &   0.110	 \\
$k=2$, Degree$=9$&    0.098  &   0.134  &   0.084    &   0.099   &   0.094	 \\
$k=3$, Degree$=22$&  0.025  &   0.039   &   0.071    &  0.090  &   0.079	 \\
\hline \hline
\end{tabular}
\end{center}
\label{V_cycle_3D}
\end{table}%

\subsection{Extension to triangular grids}\label{sect:triangular_numerics}
All the ideas and techniques we introduced earlier can be extended to the
case of triangular grids. This allows us to study how the geometry
of the grid influences the convergence rates and what are the optimal
parameters for the polynomial smoothers with respect to the geometry
of the grid.

We consider continuous linear finite element discretization of
the problem~(\ref{model_problem})-(\ref{model_problem_bc}) on a
structured triangular grid characterized by two angles $\alpha$ and
$\beta$, as explained in Section~\ref{sect:triangular_grids}. As mentioned in
that section, LFA can be applied on triangular
grids. This gives us a tool to automatically choose
polynomial degrees and suitable values of $\lambda_0$, as we did for
rectangular grids. As intergrid transfer
operator we use the natural inclusion (linear interpolation).

First example is on a grid with equilateral triangles (the
domain $\Omega$ is also such a triangle). In
Table~\ref{lfa_linear_equilateral} we display the two-grid convergence
factors provided by the LFA for the multigrid
algorithms based on the polynomial smoothers we consider. The
results are shown for $k$-coarsening for different values of $k$.
We display also the values of $\lambda_0$
and $\lambdastar$ in Table~\ref{lfa_linear_equilateral}. Similarly to
the observation made earlier, the Chebyshev polynomial smoothers
seem to provide the best convergence factors, followed by the \zipo smoother with the
optimal choice of $\lambdastar$. Here the optimal choice of $\lambdastar$ again improves significantly the
smoothing properties of the \zipo smoother. Also, we observe
that the minimal degrees for the polynomial smoothers are very close
to those used in the rectangular case, and even slightly lower.
\begin{table}[htdp]
\caption{Two-grid convergence factors on an equilateral triangle by
  using linear interpolation. The value of $\lambda_1$ is $\frac32$
 and the
    value of $\lambda_0$ is chosen by LFA (equation~\eqref{lambda_0_1}).
}
\begin{center}
\begin{tabular}{|c|c|c|c|c|c|}
  \hline \hline
  & $\lambda_0$ & $\lambdastar$ & \zipo ($\lambda_0$) & \zipo
  ($\lambdastar$) & Chebyshev
  \\  \hline
  $k=1$, Degree$=1$ &  0.529  &   0.623   &  0.212     &  0.138  &   0.129	 \\
  $k=2$, Degree$=5$ &  0.148  &   0.195   &  0.175     &  0.101  &   0.102	 \\
  $k=3$, Degree$=14$&  0.038  &   0.056   &  0.236     &  0.091  &   0.086	 \\
  \hline \hline
\end{tabular}
\end{center}
\label{lfa_linear_equilateral}
\end{table}

Next we fix an structured isosceles triangular grid with base angle
$\frac{4}{9}\pi$. This triangulation has a relative small third angle
equal to $\frac{\pi}{9}$ and this induces some grid anisotropy.
We have applied the local Fourier
analysis to the resulting discretization in such a grid and
the two-grid convergence factors are displayed in
Table~\ref{lfa_linear_isosceles} for the different $k$-coarsening
strategies and different smoothers. We observe that we are able to
obtain good convergence factors at the price of increasing the
degree of the polynomial. This shows that
much ``stronger'' smoothers are needed for anisotropic problems, a
fact that is known in the multigrid community. In fact,  when standard coarsening is
considered, coupled smoothers (line-wise relaxation) are preferred
against the standard point-wise relaxations as Jacobi,
Gauss-Seidel. We show here that with appropriate polynomial degree,
polynomial smoothers also result in good convergence rates and in
addition they have advantage in parallel computation (compared to
block Gauss-Seidel or block Jacobi method).
\begin{table}[htdp]
\caption{Two-grid convergence factors on an isosceles triangle with
  common angle $\frac{4}{9}\pi$, by using linear interpolation. The
  value of $\lambda_1$ is $\frac{17}{9}$ and the
    value of $\lambda_0$ is chosen by LFA
    (equation~\eqref{lambda_0_1}).
}
\begin{center}
\begin{tabular}{|c|c|c|c|c|c|}
\hline \hline
& $\lambda_0$ & $\lambdastar$ & \zipo ($\lambda_0$) & \zipo ($\lambdastar$) & Chebyshev \\  \hline
$k=1$, Degree$=8$ &  0.112  &   0.151   &  0.151     &  0.079  &   0.064	 \\
$k=2$, Degree$=18$&  0.033  &   0.049   &  0.261     &  0.101  &   0.092	 \\
$k=3$, Degree$=43$&  0.009  &   0.014   &  0.616     &  0.095  &   0.086	 \\
\hline \hline
\end{tabular}
\end{center}
\label{lfa_linear_isosceles}
\end{table}

\section{Conclusions\label{sect:conclusions}} We have devised a simple
technique based on the local Fourier analysis which allows us to
construct polynomial smoothers with optimal smoothing factors and
parameters tied also to the damping of the error on the coarser grids.
The theoretical and the numerical results clearly confirm that LFA is
a useful tool in designing efficient multigrid algorithms with
more aggressive coarsening and polynomial smoothing.  Also as it is
seen in the numerical examples section the degree of the polynomials
grows linearly with respect to the coarsening ratio. This is not
surprising since the more we coarsen, the more smoothing is needed to
cover the whole high-frequency interval. This technique can be applied
on structured as well as semi-structured grids. These methods are
suitable for parallelization since they only involve matrix vector
multiplications and the local Fourier analysis automates the parameter
choice. Studies and comparisons with multicolored Gauss-Seidel and SOR
smoothers are part of the planned research.

\section*{Acknowledgments}
The work of James Brannick was supported in part by NSF grants
DMS-1217142 and DMS-1320608 and by Lawrence Livermore National
Laboratory through subcontract B605152.  The work of Carmen Rodrigo is supported in part by
the Spanish project FEDER/MCYT MTM2010-16917 and the DGA (Grupo
consolidado PDIE).  The research of Ludmil Zikatanov is supported in
part by NSF grant DMS-1217142, and Lawrence Livermore National
Laboratory through subcontract B603526.
Carmen Rodrigo gratefully acknowledges the hospitality of the Center for Computational Mathematics and Applications and the Department of Mathematics of The Pennsylvania State University, where this research was partly carried out.

\bibliographystyle{plain}
\bibliography{bibzipo}

\end{document}